\newtheorem{example}{Example}[section]
\newtheorem{theorem}[example]{Theorem}
\newtheorem{definition}[example]{Definition}
\newtheorem{lemma}[example]{Lemma}
\newcommand{\sous}{\curvearrowleft}
\def\Id{{\rm Id}}
\def\Sym{{\bf Sym}}            
\def\FQSym{{\bf FQSym}}        
\def\PBT{{\bf PBT}}            
\def\CBT{{\bf CBT}}
\def\maj{{\rm imaj\,}}
\def\maj{{\rm maj\,}}
\def\P{{\bf P}}
\def\R{{\mathbb R}}
\def\N{{\mathbb N}}
\def\<{\langle}
\def\>{\rangle}
\def\F{{\bf F}}         
\def\S{{\bf S}}         
\def\SG{{\mathfrak S}}  
\def\ie{{\it i.e.}}
\def\maj{{\rm maj\,}}
\def\LL{{\mathcal L}}
\def\GG{{\mathcal G}}
\def\ad{{\rm ad\,}}
\def\H{{\mathcal H}}
\def\shuff#1#2{\mathbin{
\hbox{\vbox{ \hbox{\vrule \hskip#2 \vrule height#1 width 0pt
}%
\hrule}%
\vbox{ \hbox{\vrule \hskip#2 \vrule height#1 width 0pt
\vrule }%
\hrule}%
}}}
\long\def\psboxit#1#2{%
\begingroup\setbox0=\hbox{#2}%
\dimen0=\ht0 \advance\dimen0 by \dp0%
    \hbox{%
    \copy0%
    }
\endgroup%
}
\def\SetTableau#1#2#3#4{%
  \gdef\Tabvrule{\vrule\vrule width-0.4pt}
  \gdef\Tabhrule{\hrule\hrule height-0.4pt}  
  \gdef\Tabstrut{\vrule height#1 depth#2 width0pt\relax}
  \gdef\Tabbox##1{\hbox to #3{\hskip0.4pt\hfill\Tabstrut$#4##1$\hfill}}
} 
\def\Case#1{\vcenter{\Tabhrule%
                   \hbox{\Tabvrule\Tabbox{#1}\Tabvrule}\Tabhrule}}
\def\GenTab#1{\vcenter{\halign{&$\Case{##}$\cr#1}}\egroup}
\def\Tableau{%
  \bgroup%
  \let\ =\omit%
  \let\\=\cr%
  \offinterlineskip\GenTab}
\def\qbin#1#2{\begin{bmatrix} #1 \\ #2\end{bmatrix}}
\def\shuf{{\mathchoice{\shuff{7pt}{3.5pt}}%
{\shuff{6pt}{3pt}}%
{\shuff{4pt}{2pt}}%
{\shuff{3pt}{1.5pt}}}}%
\def\shuffle{\,\shuf\,}
\title[An operational calculus for the {\bf Mould} operad]{An operational calculus for the ${\bf Mould}$ operad}
\author[F.~Chapoton, F.~Hivert, J.-C.~Novelli, and J.-Y.~Thibon]{Fr\'ed\'eric Chapoton, 
        Florent Hivert,\\ 
        Jean-Christophe Novelli,\\ 
        and Jean-Yves Thibon}
\address[F. Chapoton]{Institut Camille Jordan, Universit\'e Claude Bernard Lyon 1, F-69622 Villeurbanne Cedex, FRANCE}
\address[F. Hivert]{LIFAR, Universit\'e de Rouen, 76801 Saint-Etienne-du-Rouvray Cedex, FRANCE}
\address[J.-C. Novelli, J.-Y. Thibon]{Institut Gaspard Monge, Universit\'e Paris-Est, 77454 Marne-la-Vall\'ee Cedex 2, FRANCE}
\email[Fr\'ed\'eric Chapoton]{chapoton@math.univ-lyon1.fr}
\email[Florent Hivert]{Florent.Hivert@univ-rouen.fr}
\email[Jean-Christophe Novelli]{novelli@univ-mlv.fr}
\email[Jean-Yves Thibon]{jyt@univ-mlv.fr} 
\date{}
\begin{document}

\begin{abstract}The operad of moulds is realized in terms of an operational
calculus of formal integrals (continuous formal power series). This leads to
many simplifications and to the discovery of various suboperads.
In particular, we prove a conjecture of the first author about
the inverse image of non-crossing trees in the dendriform operad.
Finally, we explain a connection with the formalism of noncommutative symmetric
functions.
\end{abstract}

\keywords{Operads, Moulds, Trees, Noncommutative symmetric functions}

\maketitle

\section{Introduction}

A {\em mould}, as defined by Ecalle, is a ``function of a variable
number of variables'', that is, a sequence $f_n(u_1,\ldots,u_n)$ of
functions of $n$ (continuous or discrete) variables. He developed
around this notion a versatile formalism which is an essential
technical tool in his theory of resurgence \cite{eca1981,eca1992} and
in his later work on polyzetas \cite{eca2002,eca2003,eca2004} (see the
lecture notes \cite{Cresson} for an elementary introduction).

In \cite{Ch}, the first author constructed an operad ${\bf Mould}$
from the set of (rational) moulds, and identified several (old and
new) suboperads of it.

The aim of this article is to introduce an operational calculus on
formal integrals, which allows to simplify considerably the arguments
of \cite{Ch}, and also to obtain further results. 
In particular, we find that the operad Zinbiel introduced by Loday
\cite{Lod1995} is a sub-operad of ${\bf Mould}$. As Zinbiel is based
on permutations, this allows to consider the elements of the algebra
$\FQSym$ of free quasi-symmetric functions as moulds. For instance,
the classical Lie idempotents of Dynkin, Solomon and Klyachko give
interesting examples of alternal moulds.
We also find some other new suboperads, prove conjecture 5.7 of
\cite{Ch}, and obtain some new examples of moulds.

This article is a continuation of \cite{Ch}. In a few examples, we shall
assume that the reader is familar with the notation of \cite{NCSF2,NCSF6}.

\section{Moulds as nonlinear operators}

Let $\H$ be a vector space of formal integrals
\begin{equation}\label{formint}
h(t) = \int h_u t^{u-1} d\mu(u)\,,
\end{equation}
where  $h_u$ are homogeneous elements of degree $u$ in
some graded associative algebra ${\mathcal A}$. 
We will only need the cases where $\mu$ is the Lebesgue
measure on $\R$ or $\R_+$, or the discrete measure on $\N$,
which gives back power series in $t$ with noncommutative coefficients.
In these cases, the object (\ref{formint}) can be interpreted
as a linear map ${\mathcal V}\rightarrow{\mathcal A}$
on the vector space with basis $(t^u)$ for
$u$ in the support of $\mu$, and the theory is completely similar
to that of formal power series.

A mould $f=(f_n(u_1,\ldots,u_n))$ can be interpreted as a nonlinear operator
$F$ on $\H$, by setting
\begin{equation}
F[h]=\sum_{n\ge 0}\int\cdots\int f_n(u_1,\ldots,u_n)h_{u_1}\cdots
h_{u_n}t^{u_1+\cdots+u_n} d\mu(u_1)\cdots d\mu(u_n)\,.
\end{equation}
It will be convenient to set
\begin{equation}
H(t)=\int_0^t h(\tau)d\tau = \int h_u \frac{t^{u}}{u}d\mu(u)\,.
\end{equation}
We also define the {\em polarization} of $F$ as the collection
of multilinear operators (the $h^{(i)}$ are arbitrary elements of $\H$)
\begin{equation}
F_n[h^{(1)},\ldots,h^{(n)}]=\int\cdots\int f_n(u_1,\ldots,u_n)h^{(1)}_{u_1}\cdots
h^{(n)}_{u_n}t^{u_1+\cdots+u_n} d\mu(u_1)\cdots d\mu(u_n)\,.
\end{equation}

\section{Examples of moulds}

In this section, we translate all the examples of \cite{Ch} into the new
formalism, and provide some new ones.

\begin{example}{\rm The mould
\begin{equation}
f_n(u_1,\ldots,u_n)=\frac1{u_1\cdots u_n}
\end{equation}
corresponds to the operators
\begin{equation}
F_n[h^{(1)},\ldots,h^{(n)}]=H^{(1)}(t)\cdots H^{(n)}(t)\,.
\end{equation}
}
\end{example}

\begin{example}{\rm The time-ordered exponential
\begin{equation}\label{Udet}
U(t)= T\exp\left\{ \int_0^t h(\tau)d\tau \right\}\,,
\end{equation}
\ie, the unique solution of $U'(t)=U(t)h(t)$ with $U(0)=1$,
is given by the mould
\begin{equation}
f_n(u_1,\ldots,u_n)=\frac1{u_1(u_1+u_2)\cdots(u_1+u_2+\cdots+u_n)}\,.
\end{equation}

}
\end{example}

\begin{example}{\rm More generally, for a permutation $\sigma\in\SG_n$,
the mould
\begin{equation}
f_\sigma(u_1,\ldots,u_n)=\frac1{u_{\sigma(1)}(u_{\sigma(1)}+u_{\sigma(2)})\cdots
(u_{\sigma(1)}+u_{\sigma(2)}+\cdots+u_{\sigma(n)})}
\end{equation}
integrates over the simplex
$\Delta_\sigma(t)=\{0<t_{\sigma(1)}<t_{\sigma(2)} <\cdots <t_{\sigma(n)}<t\}$:
\begin{equation}
F_\sigma[h^{(1)},\ldots,h^{(n)}]=\int_{\Delta_\sigma(t)}h^{(1)}(t_1)\cdots
h^{(n)}(t_n)dt_1\cdots dt_n\,.
\end{equation}
It follows from the well-known decomposition of a product of simplices
as a union of simplices that these moulds form a subalgebra,
isomorphic to the algebra of free quasi-symmetric functions $\FQSym$, under the correspondence
$f_\sigma\mapsto \F_\sigma$ (cf. \cite{NCSF6}).

}
\end{example}

\begin{example}{\rm To each planar binary tree $T$, we can associate
an operator $F_T$ defined by $F_\bullet[h]=H$ and, if $T=T_1\wedge T_2$
has $T_1$ and $T_2$ as left and right subtrees
\begin{equation}
F_T[h]= \int_0^t F_{T_1}[h](\tau) h(\tau) F_{T_2}[h](\tau)d\tau\,.
\end{equation}
The kernels of these operators are the moulds associated to trees
in \cite{Ch}, which can be computed graphically as follows. All the leaves
of $T$ are labelled by  $1$, and the internal nodes are labelled by $t^{u_i-1}$,
in such a way that flattening the tree yields the $t^{u_i-1}$ in their natural
order
\entrymodifiers={+<4pt>}
\begin{equation}
\vcenter{\xymatrix@C=2mm@R=2mm{
*{}   &  *{} & *{} & {t^{u_2-1}}\ar@{-}[drr]\ar@{-}[dll] \\
*{}   & {t^{u_1-1}}\ar@{-}[dr]\ar@{-}[dl]  & *{} & *{}   & *{} &
{t^{u_4-1}}\ar@{-}[dl]\ar@{-}[ddr] \\
{1}   & *{}  & {1} & *{}   & {t^{u_3-1}}\ar@{-}[dl]\ar@{-}[dr]\\
*{}   & *{}  & *{} & {1}   & *{} & {1} & {1}\\
      }}
\end{equation}
The mould $f_T(u_1,\ldots,u_n)$ is obtained by evaluating the tree
according to the following rule: the outgoing flow of each node is
the integral $\int_0^t L(\tau) v(\tau) R(\tau)d\tau$, where $L(t)$
and $R(t)$ are the outputs of its left and right subtrees, and $v(t)$
its label. For example, the above tree evaluates to
\begin{equation}
\frac{t^{u_1+u_2+u_3+u_4}}{u_1 u_3(u_3+u_4)(u_1+u_2+u_3+u_4)}\,,
\end{equation}
as can be seen on the following picture
{
\entrymodifiers={+<4pt>}
\begin{equation}
\vcenter{\xymatrix@C=2mm@R=2mm{
*{}   &  *{} & *{} &
{\frac{t^{u_1+u_2+u_3+u_4}}{u_1(u_1+u_2+u_3+u_4)u_3(u_3+u_4)}}
\ar@{-}[drr]\ar@{-}[dll] \\
*{}   & {\frac{t^{u_1}}{u_1}}\ar@{-}[dr]\ar@{-}[dl]  & *{} & *{}   & *{} &
{\frac{t^{u_3+u_4}}{u_3(u_3+u_4)}}\ar@{-}[dl]\ar@{-}[ddr] \\
{1}   & *{}  & {1} & *{}   & {\frac{t^{u_3}}{u_3}}\ar@{-}[dl]\ar@{-}[dr]\\
*{}   & *{}  & *{} & {1}   & *{} & {1} & {1}\\
      }}
\end{equation}
}
and the corresponding mould is obtained by setting $t=1$.
Each $F_T$ is a sum of operators $F_\sigma$ (sum over all
$\sigma$ such that the decreasing tree of $\sigma^{-1}$
has shape $T$).
This can be used as in \cite{HNT2} to derive the hook-length
formula for binary trees.
}
\end{example}

\begin{example}{\rm
The moulds associated to planar binary trees are related to the
solution of the quadratic differential equation
\begin{equation}
\frac{dx}{dt} = b(x,x),\ x(0)=1
\end{equation}
where the bilinear map $b(x,y)$ is assumed to have an integral
representation of the type
\begin{equation}
b(x,y)=\int x_u b_u y_u t^{u-1}d\mu(u)= (x*b*y)(t)
\end{equation}
the convolution $*$ being defined by
\begin{equation}
(x*y)(t)=\int x_u y_u t^{u-1}d\mu(u)
\end{equation}
and $b(t)=b(1,1)(t)$.
This can be recast in the form
\begin{equation}\label{eqbin}
x = 1 + B(x,x)
\end{equation}
where 
\begin{equation}
B(x,y)=\int_0^t b(x,y)(\tau)d\tau\,,
\end{equation}
so that
\begin{equation}\label{solbin}
x = 1 + B(1,1) + B(B(1,1),1)+ B(1,B(1,1))+ \cdots =  \sum_{T\in\CBT} B_T(1)
\end{equation}
where $\CBT$ is the set of (complete) binary trees, and for a tree $T$,
$B_T(a)$ is the result of evaluating the expression formed by labeling
by $a$ the leaves of $T$ and by $B$ its internal nodes.
Then, the term $B_T(1)$ in the binary tree solution is $F_T[b]$.
}
\end{example}

\begin{example}{\rm The mould \cite[(103)]{Ch}
\begin{equation}
y_{p,q}(u_1,\ldots,u_n)=\frac{u_p}{u_1\cdots u_n (u_1+\cdots+u_n)}
\end{equation}
(sum over all binary trees of type $(p,q)$) corresponds to the operator
\begin{equation}
Y_{p,q}[h^{(1)},\ldots,h^{(n)}]=\int_0^t H^{(1)}(\tau)\cdots H^{(p-1)}(\tau)
h^{(p)}(\tau)H^{(p+1)}(\tau)\cdots H^{(n)}(\tau)d\tau\,.
\end{equation}
}
\end{example}

\begin{example}{\rm The mould $TY$ defined by \cite[(104)]{Ch}
\begin{equation}
TY_n=\sum_{i=1}^n\alpha^{i-1} y_{i,n-i}
\end{equation}
corresponds to the operator
\begin{equation}
F[h]=\int_0^t (1-\alpha H(\tau))^{-1}h(\tau)(1-H(\tau))^{-1}d\tau\,.
\end{equation}
When $h$ is scalar (the $h_u$ commute), this reduces to
\begin{equation}
F[h]=\int_0^t (1-\alpha H(\tau))^{-1}(1-H(\tau))^{-1}dH(\tau)=
\frac{1}{1-\alpha}\log\left(\frac{1-\alpha H(t)}{1-H(t)}\right)\,.
\end{equation}
}
\end{example}

\begin{example}{\rm The mould \cite[(106)]{Ch}
\begin{equation}
\sum_{i=1}^n i y_{i,n-i}
\end{equation}
corresponds to the operator
\begin{equation}
F[h]=\int_0^t (1-H(\tau))^{-2}h(\tau)(1-H(\tau))^{-1}d\tau\,.
\end{equation}
When $h$ is scalar, this reduces to
\begin{equation}
F[h]=\int_0^t (1-H(\tau))^{-2}(1-H(\tau))^{-1}dH(\tau)=
\frac{H(t)(2-H(t))}{2(1-H(t))^2}\,.
\end{equation}
}
\end{example}

\begin{example}{\rm The following modified mould
\begin{equation}
\sum_{i=1}^n [i]_q y_{i,n-i},
\end{equation}
where $[i]_q$ is the quantum number $1+q+\dots+q^{i-1}$, corresponds to the operator
\begin{equation}
F[h]=\int_0^t (1-q H(\tau))^{-1}(1-H(\tau))^{-1}h(\tau)(1-H(\tau))^{-1}d\tau\,.
\end{equation}
When $h$ is scalar, this reduces to
\begin{equation}
F[h]=\int_0^t (1-q H(\tau))^{-1}(1-H(\tau))^{-2} dH(\tau)=
\frac{1}{1-q}\left(\frac{H(t)}{1-H(t)}-\frac{q}{1-q}\log\left(\frac{1-H(t)}{1-q
  H(t)}\right)\right)\,.
\end{equation}
}
\end{example}

\begin{example}{\rm The Connes-Moscovici series (\cite[(109)]{Ch})
is given by the mould
\begin{equation}
\frac{1}{n!}\sum_{k=1}^n (-1)^{n-k}{n\choose k} k  y_{k,n-k}
\end{equation}
and the corresponding operator is
\begin{equation}
F[h]=\int_0^t e^{H(\tau)}h(\tau)e^{-H(\tau)}d\tau\,,
\end{equation}
which reduces to $H(t)$ in the scalar case.
}
\end{example}

\begin{example}{\rm From the Solomon Lie idempotent, one can define
    the following mould
\begin{equation}\label{idsol}
\frac{1}{n}\sum_{\sigma\in \SG_n} (-1)^{d(\sigma)} {n-1 \choose d(\sigma)}^{-1} f_{\sigma}.
\end{equation}
Its output is the logarithm of $U(t)$ as defined by (\ref{Udet}).
It is also called the first Eulerian idempotent.

The $q$-deformation obtained in \cite{NCSF2}
yields a one-parameter family of moulds
\begin{equation}\label{qsol}
\frac{1}{n}\sum_{\sigma\in\SG_n} (-1)^{d(\sigma)} \qbin{n-1}{d(\sigma)}^{-1}
q^{\maj(\sigma) - {d(\sigma)+1 \choose 2} } f_{\sigma},
\end{equation}
where $\qbin{n-1}{d(\sigma)}$ is a quantum binomial coefficient.
}
\end{example}

\begin{example}{\rm The mould
\begin{equation}\label{idyn}
D_n=\sum_{i=0}^{n-1} (-1)^i \frac1{(u_1 u_{12} u_{123} \cdots u_{123..i} )
u_{1..n}
(u_{i+1..n} \cdots
 u_{n-1 n} u_n)} \end{equation}
(sum over trees of the form (left comb)$\wedge$(right comb))
corresponds to Dynkin's idempotent, more precisely
\begin{equation}
 D_n[h]=\int_{\Delta_n} [...[h(t_1),h(t_2)],h(t_3)]...,h(t_n)]d\mu(t_1)\cdots
d\mu(t_n)\,.
\end{equation}

}
\end{example}

\begin{example}{\rm The mould $PO$, defined in \cite[(113)]{Ch} by
\begin{equation}
PO_n=\frac1{u_1}\prod_{i=2}^n\frac{u_1+\cdots+u_{i-1}+qu_i}{u_i(u_1+\cdots+u_i)}
\end{equation}
can be decomposed on the permutations $f_\sigma$ as
\begin{equation}
PO_n=\sum_{\sigma\in\SG_n}q^{s(\sigma^{-1})-1}f_\sigma\,,
\end{equation}
where $s(\sigma)$ is the number of saillances of $\sigma$, \ie, the
number of $i$ such that $\sigma_i>\sigma_j$ for all $j<i$. This statistics
has the same distribution as the number of cycles.

}
\end{example}

\begin{example}{\rm A mould is {\em alternal} if and only if it satisfies
\begin{equation}
F[h_1+h_2]=F[h_1]+F[h_2]
\end{equation}
whenever $h_1$ and $h_2$ commute.
Typically, $\H$ is a Lie algebra
and $F$ takes its values in a completion of $U(\H)$. Then, $F$
is alternal if and only if it preserves primitive elements. 
For example, (\ref{idsol}), (\ref{qsol}) and (\ref{idyn}) are alternal.
Similarly, $F$
is {\em symmetral} if it maps primitive elements to group-like elements.
Otherwise said, 
\begin{equation}
F[h_1+h_2]=F[h_1]\cdot F[h_2]
\end{equation}
as soon as $h_1$ and $h_2$ commute.
}
\end{example}

\begin{example}{\rm The dendriform products $\prec$ and $\succ$
are given by
\begin{equation}
(F\succ G)[h]=\int_0^t F[h](\tau)\cdot \frac{d}{d\tau}G[h](\tau)d\tau\,,\quad
(F\prec G)[h]=\int_0^t \frac{d}{d\tau} F[h](\tau)\cdot G[h](\tau)d\tau\,.
\end{equation}
On permutational moulds $f_{\sigma}$, these coincide with the half-shifted
shuffles, {\it e.g.}, $312\prec 12= (31\shuffle 45)\cdot 2$.

}
\end{example}

\begin{example}{\rm The preLie product $F\sous G=F\succ G-G\prec F$
is given by
\begin{equation}
F\sous G [h]=\int_0^t[ F[h],G'[h]](\tau)d\tau\,,
\end{equation}
where $G'[h]$ denotes the derivative with respect to $\tau$.
On this expression it is clear that if $h$ is primitive, so is
$F\sous G [h]$ if $F$ and $G$ are alternal.

}
\end{example}

\section{Operadic operations on operators}

The $i$th operadic composition of two homogeneous moulds
$f_m$ and $g_n$,
as defined in\cite{Ch},
 corresponds to the operator whose polarization is
\begin{multline}
F_m\circ_i G_n
[h^{(1)},\ldots,h^{(i-1)};
h^{(i)},\ldots,h^{(i+n-1)};
h^{(i+n)},\ldots,h^{(m+n-1)}]\\
=
F_m[h^{(1)},\ldots,h^{(i-1)};
\frac{d}{dt}G_n[h^{(i)},\ldots,h^{(i+n-1)}];
h^{(i+n)},\ldots,h^{(m+n-1)}]\,.
\end{multline}
It follows from this description that the linear span of the
$F_\sigma$ is stable under these operations, hence form a suboperad.

\begin{example}{\rm According to the definition of \cite{Ch}, 
\begin{equation}
f_{312}\circ_2 f_{12}=
{\frac {1}{u_{{4}} \left( u_{{4}}+u_{{1}} \right)  \left(
u_{{2}}+u_{{4}}+u_{{3}}+u_{{1}} \right) u_{{2}}}}
=f_{2413}+f_{4213}+f_{4123}\,.
\end{equation}
and
\begin{equation}
F_{312}\circ_2 F_{12}[h^{(1)},h^{(2)},h^{(3)},h^{(4)}]
=
F_{312}[h^{(1)},\frac{d}{dt}F_{12}[h^{(2)},h^{(3)}],h^{(4)}]\,,
\end{equation}
where
\begin{equation}
F_{12}[h^{(2)},h^{(3)}]=\int_0^tdt_3\int_0^{t_3}dt_2 h^{(2)}(t_2) h^{(3)}(t_3)
\end{equation}
has as derivative, evaluated at $t_3$
\begin{equation}
F_{12}[h^{(2)},h^{(3)}]'(t_3)=\int_0^{t_3}dt_2 h^{(2)}(t_2) h^{(3)}(t_3)\,.
\end{equation}
When plugged into $F_{312}$, with the shifts $312\rightarrow 413$, this
yields
\begin{equation}
\int_{ \Delta_{413;2}(t,t_3)}h^{(1)}(t_1) h^{(2)}(t_2) h^{(3)}(t_3)h^{(4)}(t_4)
dt_1dt_2dt_3dt_4
\end{equation}
where the integration domain decomposes as
\begin{equation}
\Delta_{413;2}(t,t_3):=\{0<t_4<t_1<t_3<t;0<t_2<t_3\}
=\Delta_{2413}(t)\cup \Delta_{4213}(t)\cup \Delta_{4123}(t)\,,
\end{equation}
as expected.

}
\end{example}
To give the general rule, it is sufficient to compute 
\begin{equation}
    F_{\Id_m} \circ_i F_{\Id_n} = \sum F_\sigma
\end{equation}
where the sum is over permutations $\sigma$ in the shuffle
\begin{equation}
    ((1,\dots, i-1) \shuffle (i,\dots, i+n-2)) \cdot (i+n-1, \dots, m+n-1))\,.
\end{equation}
\begin{example}{\rm
\begin{gather}
  F_{123} \circ_1 F_{123} = F_{12345} \\
  F_{123} \circ_2 F_{123} = F_{12345} + F_{21345} + F_{23145} \\
  F_{123} \circ_3 F_{123} = F_{12345} + F_{13245} + F_{13425} + F_{31245} +
                           F_{31425} + F_{34125}
\end{gather}
}
\end{example}
This operad is anticyclic. It is in fact isomorphic to Zinbiel
(cf.  \cite{Lod}), up to mirror image of permutations.
The action of the $(n+1)$-cycle $\gamma$ on a homogenous
mould $f(u_1,\ldots,u_n)$ of degree $n$ is defined by
\begin{equation}
\gamma f(u_1,\ldots,u_n)=f(u_2,u_3,\ldots,u_n,-u_1-u_2-\cdots -u_n)\,.
\end{equation}
The subspace spanned by permutational moulds $f_\sigma$ is stable
under the action of $\gamma$. Explicitly,
\begin{equation}
\gamma f_\sigma = (-1)^{|v|}\sum_{\tau\in u\shuffle v}f_\tau
\end{equation}
where the words $u$ and $v$ are defined as follows. Let
$\sigma'(i)=\sigma(i)+1 \mod n$,  write $\sigma'=u1w$
and $v=1\bar w$ (where $\bar w$ is the mirror image of $w$).

This follows easily from the product formula for permutational moulds.
For example,
\begin{gather}
\gamma f_{1432}=-f_{2134}-f_{1234}-f_{1324}-f_{1342}= -f_2 f_{134}\\
\gamma
f_{2143}=f_{3214}+f_{3124}+f_{3142}+f_{1342}+f_{1324}+f_{1432}=f_{32}f_{14}\,.
\end{gather}

\begin{example}{\rm The operadic preLie product $F\circ G$ is
\begin{equation}
(F\circ G)[h]=\sum_{i=1}^m (F\circ_i G)[h]= DF[h](G'[h])\,,
\end{equation}
that is, the differential $DF[h]$ of $F$ at the point $h$, evaluated
on the vector $G'[h]$, where, as above, $G'[h]$ denotes the $t$-derivative.
On this description, it is clear that $\circ$ preserves alternality.

}
\end{example}

\begin{example}{\rm The derivation $\partial$ of \cite[(85)]{Ch} is
\begin{equation}
(\partial F)[h]= DF[h](1) := \lim_{\varepsilon\rightarrow
0}\frac{F[h+\varepsilon]-F[h]}{\varepsilon}\,,
\end{equation}
the derivative of $F$ at $h$ in the direction on the constant
function $1$. On this description,
it is easy to check that $\partial$ is a derivation for the various
products. For example,
\begin{equation}
\partial(F\succ G)[h]=\int_0^t \{DF[h](1) G'[h] + F[h]DG'[h](1)\}d\tau
=(\partial F\succ G + F\succ\partial G)[h]\,.
\end{equation}
For those moulds such that $F[h]$ reduces to an analytic function $F(H)$ of
$H$ in the scalar case, $\partial F[h]$ reduces to the derivative of $F(H)$
with respect to $H$.

}
\end{example}

\begin{example}{\rm The over and under operations are given by
\begin{equation}
(F/G)[h^{(1)},\ldots,h^{(m+n)}]=G[F[h^{(1)},\ldots,h^{(n)}]h^{(n+1)},h^{(n+2)},\ldots,h^{(m+n)}]
\end{equation}
and
\begin{equation}
(F\backslash G)[h^{(1)},\ldots,h^{(m+n)}]
=
F[h^{(1)},\ldots,h^{(n-1)},h^{(n)}G[h^{(n+1)},\ldots,h^{(m+n)}]].
\end{equation}

}
\end{example}

\begin{example}
{\rm  
The ARIT map is
\begin{equation}
{\rm ARIT}(F,G)[h]=DF[h](G[h] h-h G[h]).  
\end{equation}

The ARI map is
\begin{equation}
{\rm ARI}(F,G)[h]=DF[h](G[h] h-h G[h])-DG[h](F[h] h-h F[h])+F[h]G[h]-G[h]F[h].
\end{equation}
}
\end{example}

\section{Non-crossing trees and non-interleaving forests}

In \cite{Ch}, the first author has constructed an operad
on the set of non-crossing trees, and formulated a
conjecture about the inverse image of non-crossing
trees in the dendriform operad. In this section, we prove
this conjecture by means of a new presentation of this
operad.
The reader is referred to \cite{Ch} for the background
on non-crossing trees.

\subsection{A bijection}

\begin{definition}
A non-interleaving forest is a labeled rooted forest such that
the set of labels of any subtree is an interval.
\end{definition}
In particular, the labels of each connected component is an interval.
A non-interleaving tree is a non-interleaving forest with a single
component. Our new presentation of NCT will be based on non-interleaving
trees.

Let $T$ be a non-crossing tree. We define a poset $P$ from $T$ as follows.
Fist, label each diagonal edge of $T$ by the number of the unique open side
which it separates from the base, and each side edge by its own number.
Then, set $i<_P j$ iff the edge $i$ is separated from the base by the edge $j$.

\begin{figure}
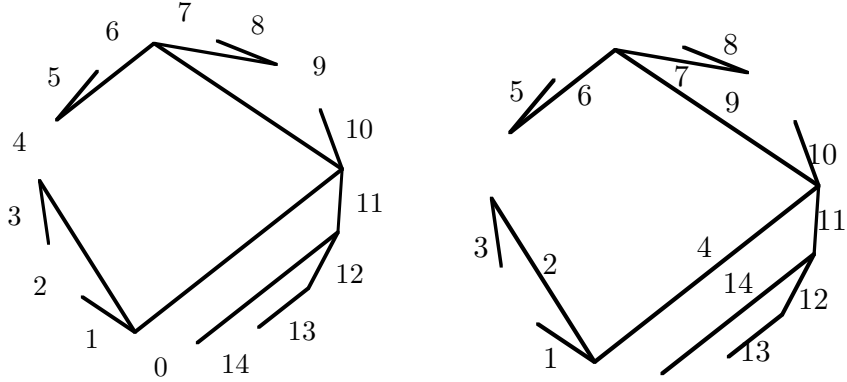

\begin{center}
\includegraphics[height=5cm]{mor.1}
\hskip1cm
\includegraphics[width=5cm]{mor.2}
\caption{\label{nonct}A non-crossing tree, and the corresponding labeling}
\end{center}
\end{figure}
\begin{lemma}If $P$ is constructed from a non-crossing tree $T$ by
the above process, its Hasse diagram $F$ is a non-interleaving forest.
Moreover, the correspondence $T\mapsto F$ is a bijection between
non-crossing trees and non-interleaving forests.
\end{lemma}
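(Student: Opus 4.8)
The plan is to analyze everything through the \emph{arc of open sides that an edge cuts off}: for each edge $e$ of $T$, let $A(e)$ be the set of open sides lying in the region bounded by $e$ and not containing the base. The whole proof rests on one consequence of the non-crossing hypothesis: two edges of $T$ never cross, so whenever two edges both separate a common point from the base, their cut-off regions are \emph{nested}. I would use this repeatedly.

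First I would verify that $<_P$ is a partial order whose Hasse diagram is a forest. Transitivity and antisymmetry of ``is separated from the base by'' are immediate from convexity together with nesting. The decisive point is that, for a fixed edge $i$, the strict upper set $\{j:i<_P j\}$ is totally ordered: every such $j$ encloses $i$, and two enclosing edges are non-crossing, hence one separates the other from the base, so they are $<_P$-comparable. Consequently each non-maximal edge $i$ has a unique minimal strict upper bound, which becomes its parent; thus $F$ is a forest, rooted at the maximal elements of $P$. Note also that the order ideal $\{i:i\le_P j\}$ is exactly the subtree of $F$ below $j$, which is what must be controlled in the next step.

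Next, the non-interleaving property. By convexity each $A(e)$ is a contiguous arc, i.e.\ an interval of $\{1,\dots,n\}$, so it suffices to prove that the label set of the subtree of $F$ rooted at $e$ equals $A(e)$. I would argue this by induction on the subtree, the key numerical input being that the cardinality of $A(e)$ equals the number $N(e)$ of edges of that subtree (the enclosed edges span exactly the enclosed vertices, and a tree on $N(e)+1$ vertices has $N(e)$ edges, which is $|A(e)|$). Since the children of $e$ cut off disjoint sub-arcs of $A(e)$ of total size $N(e)-1$, exactly one open side of $A(e)$ is left uncovered; this is precisely ``the unique open side which $e$ separates from the base'' in the definition of the label of $e$. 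Combining the inductive hypothesis on the children with this leftover side shows the subtree's label set is all of $A(e)$, an interval, so $F$ is non-interleaving.

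Finally, bijectivity. I would construct the inverse explicitly: given a non-interleaving forest $F$, each node $j$ carries the interval $I_j$ of labels of its subtree; these intervals are nested exactly as the nodes are, and the root intervals partition $\{1,\dots,n\}$. Reading each $I_j=[a_j,b_j]$ as the arc of open sides to be enclosed determines the endpoints of the corresponding edge, and the nested (hence non-crossing) structure guarantees that the reconstructed chords neither cross nor fail to assemble into a tree on the $n+1$ vertices; checking that this reconstruction inverts $T\mapsto F$ on both sides is then routine from the identity $A(e)=I_j$ established above. (Since non-crossing trees with $n$ edges and non-interleaving forests on $\{1,\dots,n\}$ are equinumerous, one could alternatively prove only injectivity, which is immediate once $A(e)$ is recovered from $F$.) I expect the interval lemma of the third paragraph, together with the verification that the reconstruction genuinely yields a non-crossing tree, to be the main obstacle, as this is exactly where the precise meaning of ``the unique open side'' and the compatibility of the labeling with the nesting must be pinned down.
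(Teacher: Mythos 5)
Your overall architecture is the same as the paper's: the paper likewise obtains the forest and the interval property from the recursive decomposition at the root edges, likewise constructs the inverse map explicitly (a node $v$ goes to the chord joining the left side of $\min\{k \mid k\le_P v\}$ to the right side of $\max\{k\mid k\le_P v\}$), and likewise upgrades surjectivity to bijectivity by equinumerosity (``same grammar, hence the same generating series''). Your nesting argument for the forest structure, your identification of the leftover side with ``the unique open side which $e$ separates from the base'' (which is indeed the right way to make that phrase precise), and your injectivity-plus-counting alternative are all sound.

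There is, however, a concrete error in the central step. You define $A(e)$ as the set of \emph{open} sides in the region cut off by $e$, and you need three claims: that $A(e)$ is an interval, that $|A(e)|=N(e)$, and that the label set of the subtree of $F$ rooted at $e$ equals $A(e)$. All three fail as soon as that region contains a side edge of $T$: a side edge is labelled by its own side number, which by definition is \emph{not} an open side, yet it is a descendant of $e$ in $F$, so its label belongs to the subtree's label set. For a counterexample, number the sides so that side $i$ joins vertices $i-1$ and $i$, take $n=4$ with base $(4,0)$ and $T=\{(0,3),(1,3),(1,2),(3,4)\}$; the open sides are $1$ and $3$, so for $e=(0,3)$ one gets $A(e)=\{1,3\}$, which is not an interval, has cardinality $2$ while the subtree rooted at $e$ has $N(e)=3$ nodes, and that subtree's label set is $\{1,2,3\}$. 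The set that makes your induction work is not the set of open sides but the set of \emph{all} non-base sides on the boundary arc of the region, i.e.\ the interval $[p+1,q]$ when $e$ joins vertices $p<q$: this does have cardinality $q-p=N(e)$ (your spanning-subtree count is then exactly right), the children's arcs are disjoint subintervals covering all but one of its elements, and the single leftover element is automatically open (every side edge in the region is a descendant of $e$ and has already claimed its own number), so it is the label of $e$. With $A(e)$ redefined in this way your proof closes up and coincides with the paper's.
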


\begin{proof}
The roots of the trees are the labels of the edges having the
base on their external sides. The edges $\alpha$ which are on the other sides
of the root edges are labeled by disjoint intervals of $[1,n]$,
and these intervals are the labels of the edges which are separated
from the bases by those $\alpha$.
Conversely, to each vertex $v$ of a non-interleaving forest, on can
associate an edge from the left side of $\min\{k|k<_P v\}$ to
the right side of $\max\{k|k<_P v\}$. This yields a non-crossing
tree mapped to $P$ by the previous algorithm. Hence, the correspondence
is onto.
Finally, non-crossing trees and non-interleaving forests have the same
grammar, hence in particular the same generating series. 
\end{proof}

For example, the non-interleaving forest associated to the non-crossing
tree on Figure \ref{nonct} is

\entrymodifiers={+<4pt>}
\begin{equation}
\vcenter{\xymatrix@C=2mm@R=2mm{
*{} & *{} & *{} & {4}\ar@{-}[drr]\ar@{-}[dll]\ar@{-}[d]
& *{} & *{} & *{} & {11} & *{} & {14}\ar@{-}[dl]\ar@{-}[dr] & *{} \\
*{} & {2}\ar@{-}[dl]\ar@{-}[dr] & *{} & {6}\ar@{-}[d]
& *{} & {9}\ar@{-}[dl]\ar@{-}[dr] & *{} & *{} & {12} & *{} & {13} \\
{1} & *{} & {3} & {5} & {7}\ar@{-}[d] & *{} & {10} & *{} & *{} & *{} & *{} \\
*{} & *{} & *{} & *{} & {8} & *{} & *{} & *{} & *{} & *{} & *{} \\
      }}
\end{equation}

\subsection{Associated rational functions}

In \cite{Ch}, one associates a rational function $f_T$ to
a non-crossing tree by the following rule:
\begin{equation}
f_T=\prod_{e\in E(T)}\frac1{{\rm ev}(e)}
\end{equation}
where $E(T)$ is the set of edges of $T$, and the evaluation
of an edge is given by
\begin{equation}
{\rm ev}(e)=\sum_{i}u_i\,,
\end{equation}
where $i$ runs over the labels of the edges separated from 0 by $e$.

It follows from the above arguments that
\begin{equation}
f_T=\prod_{i=1}^n\frac1{\sum_{ j{\le}_P i} u_j}\,.
\end{equation}

\begin{lemma}The fraction associated to $T$ is the
sum of the linear extensions of $P$:
\begin{equation}
f_T=\sum_{\sigma\in L(P)}f_\sigma\,.
\end{equation}
\end{lemma}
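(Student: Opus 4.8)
The plan is to exploit the integral interpretation of Section~2. By the preceding lemma we already know that $f_T=\prod_{i=1}^n\bigl(\sum_{j\le_P i}u_j\bigr)^{-1}$, and one of the examples of Section~3 shows that each $f_\sigma$ is the kernel of integration over the simplex $\Delta_\sigma(t)=\{0<t_{\sigma(1)}<\cdots<t_{\sigma(n)}<t\}$: it is recovered by integrating $\prod_i t_i^{u_i-1}$ over $\Delta_\sigma(1)$. So I would read $\sum_{\sigma\in L(P)}f_\sigma$ as the integral of that same integrand $\prod_i t_i^{u_i-1}$ over $\bigcup_{\sigma\in L(P)}\Delta_\sigma(1)$, turning the statement into a geometric decomposition identity followed by one explicit integral computation.

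First I would identify the union of simplices. With the convention that $\sigma\in L(P)$ means $i<_P j$ forces $i$ to precede $j$ in the word $\sigma$, the open simplices $\Delta_\sigma(1)$, $\sigma\in L(P)$, have pairwise disjoint interiors and their union is, up to a null set, the order chamber
\begin{equation}
C_P=\{\,0<t_i<1,\ t_i<t_j\text{ whenever }i<_P j\,\}.
\end{equation}
This is the standard triangulation of an order polytope by its linear extensions, the direction of the inequalities being forced by matching the smallest coordinate with the first letter of $\sigma$. Hence $\sum_{\sigma\in L(P)}f_\sigma=\int_{C_P}\prod_i t_i^{u_i-1}\,dt_1\cdots dt_n$.

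Next I would compute this integral by integrating from the leaves toward the roots, and this is exactly where the previous Lemma is used: its conclusion that the Hasse diagram of $P$ is a forest guarantees that the down-set $\{j:j\le_P i\}$ is the vertex set of the subtree rooted at $i$, and that the subtrees below distinct children of a node are disjoint. Writing $g_i(s)$ for the integral of $\prod_{j\le_P i}t_j^{u_j-1}$ over the part of $C_P$ carried by the subtree of $i$ with $t_i<s$, the forest structure gives the recursion $g_i(s)=\int_0^s t_i^{u_i-1}\prod_{c}g_c(t_i)\,dt_i$ over the children $c$ of $i$; an immediate induction yields $g_i(s)=s^{U_i}\big/\prod_{j\le_P i}\bigl(\sum_{m\le_P j}u_m\bigr)$ with $U_i=\sum_{j\le_P i}u_j$. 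Taking the product over the roots, whose subtrees partition $[1,n]$, and setting $s=1$ produces exactly $\prod_i\bigl(\sum_{j\le_P i}u_j\bigr)^{-1}=f_T$.

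Alternatively, and this is probably the shortest route to write down, I would argue by induction on $n$ purely algebraically. The last letter $\sigma(n)$ of a linear extension must be a maximal element, that is, a root $m$; stripping it off sends $\sigma$ to a linear extension of the forest $P\setminus\{m\}$ and factors $f_\sigma$ as $f_{\sigma'}\cdot\bigl(\sum_j u_j\bigr)^{-1}$. Since removing a maximal element leaves all other down-sets unchanged, the induction hypothesis gives $\sum_{\sigma\in L(P)}f_\sigma=\bigl(\sum_j u_j\bigr)^{-1}\sum_{m\text{ root}}\prod_{i\ne m}\bigl(\sum_{j\le_P i}u_j\bigr)^{-1}$, and clearing denominators reduces the claim to the identity $\sum_{m\text{ root}}\sum_{j\le_P m}u_j=\sum_{j=1}^n u_j$, which holds because the down-sets of the roots partition $[1,n]$. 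The main obstacle in either approach is not an estimate but a bookkeeping one: pinning down the correct orientation convention for $L(P)$, and verifying that the forest property supplied by the first Lemma is precisely what makes both the geometric factorization and the ``unchanged down-sets'' step go through.
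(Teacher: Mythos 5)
Your proof is correct, but it takes a genuinely different route from the paper's. The paper argues structurally: since NCT is a suboperad of the dendriform operad, $f_T$ is known a priori to lie in $\FQSym$, i.e.\ $f_T=\sum_\sigma c_\sigma(T)f_\sigma$ with integer coefficients, and the coefficients are then read off as iterated residues of $f_T$ at $u_{\sigma_1}=0,\ u_{\sigma_2}=0,\ldots$, which vanish unless $\sigma\in L(P)$ and equal $1$ otherwise. You instead verify the identity directly, without invoking the operadic embedding: either geometrically, by recognizing $\sum_{\sigma\in L(P)}f_\sigma$ as the integral of $\prod_i t_i^{u_i-1}$ over the order chamber triangulated by the simplices $\Delta_\sigma$ and evaluating it leaf-to-root along the forest, or algebraically, by stripping the last letter (a maximal element, hence a root) and inducting. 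Both of your arguments are complete; in particular your algebraic induction correctly isolates the one place where the forest structure is genuinely needed, namely that the down-sets of the roots partition $[1,n]$ (the identity visibly fails for a poset in which some element lies below two maximal elements, so this is not a removable hypothesis). What your approach buys is self-containedness and an elementary, checkable computation; what the paper's buys is brevity, since the suboperad fact and the residue extraction are already available in its framework and the same residue technique identifies the coefficients $c_\sigma(T)$ for more general elements of the operad, not just those indexed by non-crossing trees. Your orientation convention for $L(P)$ (smaller elements of $P$ first, maximal elements last) is the right one and is consistent with the paper's later recursive description of linear extensions, so the bookkeeping worry you flag is resolved correctly.
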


\begin{proof}
$f_T$ is in $\FQSym$, since NCT is a suboperad of Dend,
\begin{equation}
f_T=\sum c_\sigma(T)f_\sigma\,,
\end{equation}
and $c_\sigma(T)$ is the iterated residue of $f_TT$ at 
$x_{\sigma_1}=0$, $x_{\sigma_2}=0,\ldots$
so that $c_\sigma(T)=0$ if $\sigma\not\in L(P)$, and
$c_\sigma(T)=1$ otherwise.
\end{proof}

\subsection{Proof of the conjecture}
Hence, the morphism form the free NCT-algebra on one generator to
the free dendriform algebra on one generator $\PBT$, regarded
as a subalgebra of $\FQSym$, consists in mapping a non-interleaving
forest on the sum of its linear extensions:
\begin{equation}
T\mapsto P\mapsto F\mapsto  \sum_{\sigma\in L(P)}\F_\sigma= \sum_{t\in I}\P_t\,,
\end{equation}
where $\P_t$ is the natural basis of $\PBT$, and $I$ a set of binary trees.
Conjecture 6.5 of \cite{Ch} is the following:

\begin{theorem}
$I$ is an interval of the Tamari order.
\end{theorem}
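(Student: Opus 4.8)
The plan is to realize $I$ as the image of a single weak-order interval under the canonical projection onto the Tamari lattice. Recall that $\PBT$, viewed inside $\FQSym$, has basis $\P_t=\sum\F_\sigma$, the sum ranging over the sylvester class of the binary tree $t$ (the permutations whose binary search tree is $t$). These classes are exactly the fibers of the projection $\pi\colon\SG_n\to\{\text{binary trees}\}$, each fiber is an interval of the (right) weak order, and $\pi$ is a surjective lattice morphism from the weak order onto the Tamari order. By the previous lemma $f_T=\sum_{\sigma\in L(P)}\F_\sigma$, and since NCT is a suboperad of Dend we have $f_T\in\PBT$; hence the $\F$-coefficients of $f_T$ are constant along sylvester classes, so $L(P)$ is a union of fibers of $\pi$, the set $I$ equals $\pi(L(P))$, and $L(P)=\pi^{-1}(I)$.

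First I would record the purely order-theoretic fact that a surjective lattice morphism sends intervals to intervals: $\pi([\alpha,\omega])=[\pi(\alpha),\pi(\omega)]$. The inclusion from left to right is clear, and for $y$ with $\pi(\alpha)\le y\le\pi(\omega)$ one lifts $y$ to some $x$ with $\pi(x)=y$ and checks that $x'=(x\vee\alpha)\wedge\omega$ lies in $[\alpha,\omega]$ and satisfies $\pi(x')=(y\vee\pi(\alpha))\wedge\pi(\omega)=y$. Granting this, the whole theorem reduces to one statement: that $L(P)$ is itself an interval $[\alpha,\omega]$ of the weak order. Indeed, then $I=\pi(L(P))=[\pi(\alpha),\pi(\omega)]$ is a Tamari interval, which is exactly the claim.

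The heart of the matter, and the step I expect to be the main obstacle, is to show that $L(P)$ is a weak-order interval, and this is precisely where the non-interleaving hypothesis is used. Encoding a permutation by its set of value-inversions, the linear-extension constraints split into a set $R$ of forced inversions (a descendant numerically larger than its ancestor in $P$) and a set $F$ of forbidden inversions (a descendant numerically smaller than its ancestor), all remaining, $P$-incomparable pairs being free. I would prove that the non-interleaving condition---that every principal order ideal $\{w : w\le_P v\}$ is an interval of integers---forces both $R$ and the complement of $F$ to be transitively closed (biconvex), so that they are the inversion sets of permutations $\alpha$ and $\omega$ with $L(P)=[\alpha,\omega]$. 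The point is that a violation of biconvexity would require a genuinely interleaved pair of relations, say $a<_P c$ and $b<_P d$ with $a<b<c<d$, whose two ideals cross; this is exactly what the interval condition on subtrees rules out, while the incomparable pairs supply the free inversions that fill out the interval between $\alpha$ and $\omega$. Checking biconvexity cleanly---most naturally by induction on the forest structure, peeling off a root together with its interval-labelled subtrees---is the technical core of the argument; once it is in place, the weak-order endpoints $\alpha$ and $\omega$ project to the Tamari endpoints, and the theorem follows.
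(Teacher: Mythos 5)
Your argument is correct in outline and shares its crux with the paper's proof: both reduce the theorem to the claim that the set $L(P)$ of linear extensions of a non-interleaving forest is an interval of the weak order. Where you genuinely diverge is in the passage from that weak-order interval to a Tamari interval. The paper first reduces to a single tree (a forest maps to the product of its components), observes that $L(P)$ is obtained recursively by shuffling the linear extensions of the subtrees of the root and appending the root, reads off that the resulting weak-order interval has a $312$-avoiding minimum and a $132$-avoiding maximum, and invokes the known characterization of Tamari intervals by such pairs of endpoints. You instead use the fact that the sylvester projection $\pi$ is a surjective lattice morphism from the weak order onto the Tamari lattice, together with the (correct) verification that such a morphism maps intervals to intervals via $x'=(x\vee\alpha)\wedge\omega$. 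This buys you independence from any explicit description of the endpoints, at the price of importing the nontrivial (though standard) fact that the Tamari lattice is a lattice quotient of the weak order; note also that the union-of-fibers observation, while needed to identify $I$ with $\pi(L(P))$, is not needed for the interval property itself. Be aware that both you and the paper leave the shared technical core --- that non-interleaving forces $L(P)$ to be a weak-order interval --- at the level of a sketch: the paper derives it from the recursive shuffle description, you propose biconvexity of the forced and forbidden inversion sets by induction on the forest. Your sketch is plausible (an interleaved pair $a<_P c$, $b<_P d$ with $a<b<c<d$ in different subtrees is exactly what destroys biconvexity, and the interval condition on subtree labels rules it out), but a complete write-up should treat the forest case directly, checking that the free inversions between components labelled by consecutive intervals create no closure obstruction --- a point the paper sidesteps by passing to the product of components.
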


\begin{proof} 
Under this morphism, a forest becomes the product of its connected
components. It is therefore sufficient to prove that the linear
extensions of a non-interleaving tree have the required properties.
The linear extensions of a tree are computed recursively by shuffling
the linear extensions of the subtrees of the root and concatenating the
root at the end. By definition of a non-interleaving tree,
these form an interval of the permutohedron, whose minimum avoids
the pattern 312 and maximum avoids 132. This is a known characterization
of Tamari intervals.
\end{proof}

\subsection{Another approach}
One can also start with an operad NIT defined directly 
on non-interleaving trees.

There are two natural binary operations on non-interleaving trees.
Let $T_1$ and $T_2$ be two such trees. 
Let $k$ be the number of vertices of $T_1$, and denote by $T'_2$
the result of shifting the labels of $T_2$ by $k$.
Define
\begin{itemize}
\item $T_1\prec T_2 =$ grafting of the root of $T'_2$ on the root of $T_1$
\item $T_1\succ T_2 =$ grafting of the root of $T_1$ on the root of $T'_2$
\end{itemize}
These are two magmatic operations, satisfying the single relation
\begin{equation}\label{relnit}
(x\succ y)\prec z = x\succ(y\prec z)\,.
\end{equation}
The operad defined by this relation has been first considered in
\cite{leroux} under the name of L-algebra.

Consider now the free algebra on one generator. Its monomials can
be represented by bicolored complete binary trees, whose internal vertices
are colored by $\prec$ or $\succ$. Relation (\ref{relnit}) implies
that a basis is formed by the trees having no right edge from
a vertex $\succ$ to a vertex $\prec$.
Loday \cite{LodLoth} has presented a general method (relying
on Koszul duality for quadratic operads) for counting such 
$k$ colored binary trees avoiding a set $Y$ of edges.
Their generating series (with alternating signs) $g(t)$ is obtained
by inverting (for the composition of power series) the series
$f(t)=-t+kt^2-|X_2|t^3+|X_3|t^4-\cdots$, where $X_n$ is the
set of trees with $n$ internal nodes whose all edges are in $Y$. 
Here $k=2$, and there is only one tree, with two internal vertices,
having all edges in $Y$. Hence,
$f(t)=-t+2t^2-t^3$, and we get the sequence A006013 of \cite{Sloane}
$g(t)=-t+2t^2-7t^3+30t^4-143t^5+...$ (based non-crossing trees).

\section{Appendix: Moulds over the positive integers}

When the variables $u_k$ take only positive integer
values, we denote them by $i_k$ and write
$f_I=f_{i_1,\ldots,i_r}$ instead of $f(i_1,\ldots,i_r)$.
This corresponds to the choice
\begin{equation}
d\mu(t)=\sum_{n\ge 1}\delta(t-n)\,.
\end{equation}
In this case, there is a close connection with the formalism
of noncommutative symmetric functions, which can also represent
nonlinear operators on powers series with noncommuting coefficients.

In this appendix, we will give the interpretation of some of the
previous examples in this context, as well as of some new ones.
We assume here that the reader is familiar with the notation of \cite{NCSF2}.

\subsection{Generating sequences of noncommutative symmetric functions}

By definition, $\Sym$ is a graded free associative algebra, with exactly
one generator for each degree. Several sequences of generators are of common
use, some of which being composed of primitive elements, whilst other
are sequences of divided powers, so that their generating series is
group-like. Each pair of such sequences $(U_n)$, $(V_n)$ defines two moulds,
whose coefficients express the expansions of the $V_n$ on the $U^I$,
and vice-versa. Ecalle's four fundamental symmetries reflect the four
possible combinations of the primitive or group-like characteristics.

If we denote by $\LL$ the (completed) primitive Lie algebra of $\Sym$ and
by $\GG=\exp\,\LL$ the associated  multiplicative group,
we have the following table
$$
\begin{tabular}{|c|c|}
\hline
$\LL \rightarrow \LL$ & \text{Alternal}\\
\hline
$\LL \rightarrow \GG$ & \text{Symmetral}\\
\hline
$\GG \rightarrow \LL$ & \text{Alternel}\\
\hline
$\GG \rightarrow \GG$ & \text{Symmetrel}\\
\hline
\end{tabular} 
$$
The characterization of alternal moulds in terms of shuffles
is equivalent to Ree's theorem (cf. \cite{Reut}): {\em the orthogonal of
the free Lie algebra in the dual of the free associative algebra
is spanned by proper shuffles.}

The composition of moulds is the usual composition of the
corresponding operators. Since the relationship between
two sequences of generators 
of the same type (divided powers or grouplike)
can always be written in the form
\begin{equation}
V_n (A) = U_n(XA)\quad (\text{or } V(t)=U(t)*\sigma_1(XA))\,,
\end{equation}
where $X$ is a virtual alphabet (commutative and ordered, \ie,
a specialization of $QSym$), the composition of alternal or symmetrel 
moulds can also be
expressed by means of the internal product.

\subsection{$S_n$ and $\Lambda_n$: symmetrel}
The simplest example just gives the coefficients of the
inverse of a generic series regarded as $\lambda_{-t}(A)$.
It is a symmetrel mould:
\begin{equation}
S_n = \sum_{I\vDash n}f_I\Lambda^I\,,\quad\ f_I=(-1)^{n-l(I)}\,.
\end{equation}

\subsection{$S$ and $\Psi$: symmetral/alternel} 

The mould
\begin{equation}
f_I=\frac1{i_1(i_1+i_2)\ldots (i_1+\ldots i_r)}
\end{equation}
gives the expression of $S_n$ over $\Psi^I$:
\begin{equation}
S_n =\sum_{I\vDash n}f_I \Psi^I\,.
\end{equation}
Since $\sigma'(t)= \sigma(t)\, \psi(t)$, this expresses the
solution of the differential equation in terms of iterated integrals 
\begin{multline}\label{INTIT}
\sigma(t)= 1 + \int_0^t \! dt_1\, \psi(t_1) +
\int_0^t \! dt_1 \int_0^{t_1} \! dt_2 \, \psi(t_2)\psi(t_1)\\
 +
\int_0^t \! dt_1 \int_0^{t_1} \! dt_2 \int_0^{t_2} \! dt_3\,
\psi(t_3)\psi(t_2)\psi(t_1) + \cdots\\
= T\exp\left\{\int_0^t\psi(s)ds\right\}\,,
\end{multline}
or as Dyson's $T$-exponential.

\subsection{An alternal mould: the Magnus expansion}

The expansion of $\Psi_n$ in the basis $(\Phi^K)$ is given by
\begin{equation}
\Psi_n = \sum_{|K|=n}
\left[
\sum_{i=1}^{\ell (K)} (-1)^{i-1}{\ell (K)-1 \choose i-1} k_i
\right]
{\Phi^K \over \ell (K)! \pi (K)}
\ ,
\end{equation}
where $\pi(K)=k_1\cdots k_r$.
Using the symbolic notation
\begin{equation}
\{\Phi_{i_1}\cdots\Phi_{i_r}\, ,\, F\}
=\ad\Phi_{i_1} \ad\Phi_{i_2}\cdots \ad\Phi_{i_r} (F)
=[\Phi_{i_1},[\Phi_{i_2},[\ldots[\Phi_{i_r}\, ,\, F]\ldots]]]
\end{equation}
and the classical identity
\begin{equation}
e^a b e^{-a} =\sum_{n\ge 0} {(\ad a)^n\over n!}\, b = \{e^a\, ,\, b\} \ ,
\end{equation}
we obtain
\begin{equation}
\psi(t) =\sum_{n\ge 0} {(-1)^n\over (n+1)!}\{\Phi(t)^n\, ,\,\Phi'(t)\}
=\left\{ {1-e^{-\Phi(t)} \over \Phi(t)} \, ,\, \Phi'(t) \right\}
\end{equation}
which by inversion gives the Magnus formula:
\begin{equation}\label{MAGNUS}
\Phi'(t)= \left\{ {\Phi(t)\over 1-e^{-\Phi(t)}  } \, ,\, \psi(t)
\right\}
= \sum_{n\ge 0} {B_n\over n!} (\ad \Phi(t))^n\, \psi(t)
\end{equation}
the $B_n$ being the Bernoulli  numbers.

\subsection{Another alternal mould: the continuous BCH expansion}

The expansion of
$\Phi(t)$ in the basis $(\Psi^I)$ is given by the series
\begin{equation}\label{BCHCP}
\Phi(t)= \sum_{r\ge 1}\int_0^t dt_1\cdots \int_0^{t_{r-1}}dt_r
\sum_{\sigma\in \S_r}
{(-1)^{d(\sigma)} \over r} {r-1\choose d(\sigma)}^{-1}
\psi(t_{\sigma(r)})\cdots \psi(t_{\sigma(1)}) \ .
\end{equation}
Thus, the coefficient of $\Psi^I=\Psi_{i_1}\cdots\Psi_{i_r}$ in the
expansion of $\Phi_n$ is equal to
\begin{equation}
n\int_0^1 dt_1\cdots \int_0^{t_{r-1}} dt_r
\sum_{\sigma\in\S_r} {(-1)^{d(\sigma)}\over r}
{r-1\choose d(\sigma)}^{-1}
t_{\sigma(r)}^{i_1-1}\cdots t_{\sigma(1)}^{i_r-1} \ .
\end{equation}

It is worth observing that this expansion, together with a simple
expression of $\Psi_n$ in terms of the dendriform operations
of $\FQSym$, recently led Ebrahimi-Fard, Manchon, and Patras \cite{emp}, to an
explicit solution of the Bogoliubov recursion for renormalization
in Quantum Field Theory.

\subsection{Moulds related to the Fer-Zassenhaus series}

The noncommutative power sums of the
third kind $Z_n$ are defined by
\begin{equation}
\sigma(A;t) = \exp(Z_1\, t) \, \exp({Z_2 \over 2}\, t^2) \,
\dots \, \exp({Z_n \over n}\, t^n) \, \dots
\end{equation}
The first values of $Z_n$ are
$$
Z_1 = \Psi_1 \ , \ Z_2 = \Psi_2 \ , \
Z_3 = \Psi_3 + {1 \over 2} \, [\Psi_2,\Psi_1] \ ,
$$
$$
Z_4 = \Psi_4 + {1 \over 3}\, [\Psi_3,\Psi_1] +
{1\over 6} \, [[\Psi_2,\Psi_1],\Psi_1] \ ,
$$
$$
Z_5 = \Psi_5 + {1 \over 4}\, [\Psi_4,\Psi_1] + {1\over 3} \, [\Psi_3,\Psi_2]
+ {1 \over 12} \, [[\Psi_3,\Psi_1],\Psi_1]
$$
$$
- {7 \over 24} \, [\Psi_2,[\Psi_2,\Psi_1]]
+ {1 \over 24} \, [[[\Psi_2,\Psi_1],\Psi_1],\Psi_1] \ .
$$
This defines  interesting alternal moulds. There is no
known expression for $Z_n$ on the $\Psi^I$, but
Goldberg's explicit formula  (see \cite{Reut}) for the
Hausdorff series gives the decomposition of $\Phi_n$ on the basis $Z^I$.

The fact that $Z_n$ is a Lie series is known as the Fer-Zassenhaus ``formula''.

\subsection{A one-parameter family}

It follows from the characterization of Lie idempotents
in the descent algebra that
$P_n(A;q)=(1-q^n) \Psi_n\left({A\over 1-q}\right)$ is a noncommutative
power sum.  
The corresponding Lie idempotent is
\begin{equation}
\varphi_n(q) =
{1\over n}\sum_{|I|=n}
{(-1)^{d(\sigma)}\over \qbin{n-1}{d(\sigma)}}
q^{\maj(\sigma) - {d(\sigma)+1 \choose 2} } \sigma
\end{equation}
It specializes to
\begin{equation}
\varphi_n(0)=\theta_n\,,\qquad
\varphi_n(1) = \phi_n\,\qquad
\varphi_n(\omega) = \kappa_n\,
\end{equation}
where $\omega$ is a primitive $n$th root of unity (and $\varphi_n(\infty) = \theta_n^*$).

The nonlinear operator $E_q[h(t)]$, where  $h(t)=\sum_{n\ge 1}H_nt^{n-1}$, is

\begin{equation}
E_q[h(t)]= \sum_I c_I(q) H^I t^{|I|}
\end{equation}
Then,
\begin{equation}
E_1[h(t)]=\exp\int_0^th(s)ds=\exp H(t)
\end{equation}
while $E_0$ is Dyson's chronological exponential
\begin{equation}
E_0[h(t)]=T\exp\int_0^th(s)ds
=1+\int_0^t dt_1 h(t_1) + \int_0^t dt_1\int_0^{t_1} dt_2 h(t_2)h(t_1)
+\cdots
\end{equation}

\subsection{Another one-parameter family}

In \cite{NCSF2}, it is proved that there exists a unique
sequence $\pi_n(q)$ of Lie idempotents which are left and right eigenvectors
of $\sigma_1((1-q)A)$ for the internal product:
\begin{equation}
\sigma_1((1-q)A) * \pi_n(q) = \pi_n(q) * \sigma_1((1-q)A) = (1-q^n)\pi_n(q)
\end{equation}
These elements have the following specializations:
\begin{equation}
\pi_n(1) = {\Psi_n \over n}, \frac1nK_n(\zeta),\ \pi_n(0) =\frac1n Z_n\,. 
\end{equation}
In particular, the associated alternal moulds provide an interpolation
between the $T$-exponential and the Fer-Zassenhaus expansion.

\small
\section*{Acknowledgements}
This work has been partially supported by Agence Nationale de la Recherche,
 grant ANR-06-BLAN-0380
\footnotesize

\end{document}